\definecolor{rouge}{rgb}{0.85,0.1,.4}
\definecolor{bleu}{rgb}{0.1,0.2,0.9}
\definecolor{violet}{rgb}{0.7,0,0.8}
\definecolor{noir}{rgb}{0,0,0}
\theoremstyle{theorem}
\newtheorem{theorem}{Theorem}[section]
\newtheorem{question}[theorem]{Question}
\newtheorem{proposition}[theorem]{Proposition}
\newtheorem{corollary}[theorem]{Corollary}
\theoremstyle{definition}
\newtheorem{definition}[theorem]{Definition}
\theoremstyle{definition}
\newtheorem{remark}[theorem]{Remark}
\newtheorem{example}[theorem]{Example}
\DeclareMathOperator{\diam}{diam}
\renewcommand{\setminus}{\smallsetminus}
\newcommand{\N}{\mathbf{N}} %:P
\newcommand{\Z}{\mathbf{Z}}
\newcommand{\Sym}{\operatorname{Sym}}
\newcommand{\Supp}{\operatorname{Supp}}
\def\linf{\ell^\infty}
\def\Ell{\mathrm L}
\begin{document}
\title{Coarse amenability at infinity}

%%% Author 

\author{Thibault Pillon}
\thanks{The author has been supported by the European Research Council consolidator grant 614195.}
\address{Analysis Section, KU Leuven, Celestijnenlaan 200b, Box 2400\\ 3001 Leuven, Beligum}
\email{thibault.pillon@kuleuven.be}

%%%% Abstract %%%%
\begin{abstract} 
We define two different weakenings of coarse amenability (also known as Yu's property A), namely fibred coarse amenability and coarse amenability at infinity. These two properties allow us to prove that a residually finite group is coarsely amenable if and only if some (or all) of its box spaces satisfy the weak properties. We then elaborate on a result of Willett by showing that graphs with large girth always satisfy fibred coarse amenability. Finally, we discuss some examples and counter-examples to these properties.

\end{abstract}

%\date{\today}
\renewcommand{\keywordsname}{Key words}
\keywords{amenability, coarse amenability, box spaces} %Keywords
%\subjclass[2010]{AAXXBB, CCYYDD} %2010 MSC

\maketitle
\setcounter{tocdepth}{1} %Depth of the table of content
{ %This ensures the table of content to stay black even if its entries are links
%\hypersetup{linkcolor=black}
%\tableofcontents
}

%%%%%%%%%%% INTRODUCTION %%%%%%%%%%%%
\section{introduction}
The idea to associate to a group, or to a group action, a geometric object capturing some properties of the group, or the action is very well-spread. Probably the most famous of such objects is the Cayley graph associated to any group together with a generating set. In this paper, we are mainly interested in box spaces.

Given a finitely generated, residually finite group $G$, a \emph{filtration} of $G$ is a family of normal finite index subgroups $\{N_i\}_{i\ge 0}$ such that $\cap_i N_i=\{e\}$.
We will denote by $G_i$ the quotient group $G/N_i$ and if $S$ is a generating set of $G$, we define $X_i$ to be the Cayley graph of $G_i$ with respect to the image of $S$.

\begin{definition}\label{Def:BoxSpace}
The \emph{box space} $\square_{\{N_i\}} G$  associated to $G$ and ${N_i}$ is the metric space $(\sqcup_i X_i,d)$ where $d$ is defined by
\[d(x,y)=
\begin{cases}
d_{X_i}(x,y),& \text{if }x,y\in X_i\\
\max(\diam(X_i),\diam(X_j))+1, &\text{if } x\in X_i, y\in X_j, i\neq j
\end{cases}\]
\end{definition}

Observe that we don't put $S$ as a subscript of $\square_{\{N_i\}}G$. This is because, the identity map on $\square_{\{N_i\}}G$ is naturally abilipschitz equivalence between the metrics induced by two different generating sets. Since most properties we consider are bilipschitz--invariant (and more generally, quasi--isometry-invariant or coarse invariants), this is not an issue. This is not at all true of two box spaces arising from different filtrations (see e.g. \cite{KV17,DK17}). However, we will always drop the $\{N_i\}$ subscript when there is no confusion about the filtration.

The first apparition of a box space can be traced back to Margulis (\cite{Mar73}). He proved that if $G$ has property (T), the family of graphs $X_i$ is a family of expanders. In recent years, a lot of results were obtained linking properties of a residually finite group $G$ and properties of its box-spaces. Margulis original implication turns into an equivalence when considering property ($\tau$) as a weakening of property (T). Unpublished work of Guentner established that $G$ is amenable if and only if its box-spaces are coarsely amenable. He also proved that if a box space of $G$ is coarsely embeddable then $G$ enjoys the Haagerup property (See \cite{Roe} for the proofs of these statements). Later \cite{CWW} established that the Haagerup property for $G$ is actually equivalent to all (or equivalently some) box-spaces of $G$ admitting a fibred coarse embedding into a Hilbert space, a weak notion of coarse embeddability introduced in \cite{CWY}. This was generalised by Arnt (\cite{Arn16}) for groups admitting a fibred coarse embedding into an $\Ell^p$-space. Compounding all these results, we get a dictionary translating group properties into coarse geometric versions of the same properties. The objective of this note is to add one more line to this dictionary. We answer the following question:

\begin{question}\label{BasicQuestion}
Is there a coarse property of a metric space such that a residually finite group is coarsely amenable if and only if one or all of its box spaces enjoy this property?
\end{question}

Coarse amenability, or Yu's property A, was introduced in \cite{Yu00} as a coarse analogue of amenability. Instead of the original definition, we give a reformulation due to Higson and Roe (\cite{HR}).
Recall that a metric space $X$ is \emph{uniformly discrete} if there exists $\varepsilon>0$ such that $\forall x,y\in X, d(x,y)<\varepsilon\Rightarrow x=y$. It has \emph{bounded geometry} if for every $R\ge0$ there is a uniform bound on the number of elements in any ball of radius $R$. We denote by $B_x(R)$ the closed ball of radius $R$ centred at $x$.

\begin{definition}[\cite{HR}]\label{Def:CoarseAmenability}
Let $X$ be a uniformly discrete metric with bounded geometry. $X$ is \emph{coarsely amenable} if for all $R,\varepsilon>0$, there exists $S>0$ and a family $\{f_x\}_{x\in X}$ of finitely supported probability measures on $X$ such that

\begin{enumerate}
\item $\Supp(f_x)\subset B_x(S),$
\item $d(x,y)\le R\Rightarrow \|f_x-f_y\|_1\le \varepsilon.$
\end{enumerate} 
\end{definition}

In section 2. we introduce the notion of \emph{fibred coarse amenability}, this definition is inspired by fibred coarse embeddability and, very like its model, is too long to state in this introduction. We prove that this property gives a positive answer to Question \ref{BasicQuestion}. In section 3. we extract from fibred coarse amenability the following definition:
\begin{definition}\label{Def:FibredCoarseAmenability}
Let $X$ be a uniformly discrete metric space with bounded geometry. $X$ is \emph{coarsely amenable at infinity} if for all $R,\varepsilon>0$, there exists $S\ge 0$ such that for all $L\ge0$, there exist a finite subset $K_L\subset X$ with the property that all finite subsets $C\in X\setminus K_L$ of diameter at most $L$ satisfy Definition \ref{Def:CoarseAmenability} with parameters $R,\varepsilon,S)$.
\end{definition}

Summarising our results from Sections 2. and 3. we get the following theorem:

\begin{theorem}\label{Thm:Main}
Let $G$ be a finitely generated residually finite group. The following are equivalent:
\begin{enumerate}
\item $G$ is coarsely amenable.
\item All (or equivalently one) box spaces of $G$ are fibred coarsely amenable.
\item All (or equivalently one) box spaces of $G$ are coarsely amenable at infinity.
\end{enumerate}
\end{theorem}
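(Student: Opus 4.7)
The plan is to establish Theorem \ref{Thm:Main} by proving the cycle of implications $(1)\Rightarrow(2)\Rightarrow(3)\Rightarrow(1)$ for a \emph{fixed} filtration $\{N_i\}$. Since $(1)$ is a property of $G$ alone and does not depend on the filtration, this cycle will automatically produce the ``some or all box spaces'' statement: $(2)$ or $(3)$ holding for \emph{one} box space forces $(1)$, and $(1)$ in turn forces $(2)$ and $(3)$ for \emph{every} box space.

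For $(1)\Rightarrow(2)$, I would use that coarse amenability of a finitely generated group coincides with classical amenability, so $G$ admits a F\o{}lner sequence. Projecting a sufficiently invariant F\o{}lner set $F\subset G$ to each $G_i=G/N_i$ yields a probability measure $f^{(i)}_{\bar e}$ on $X_i$, which can be translated by elements of $G_i$ to produce a family $\{f^{(i)}_{\bar g}\}_{\bar g\in G_i}$ satisfying the conditions of Definition \ref{Def:CoarseAmenability} with uniform parameters across all components. The argument is parallel to Guentner's proof that $G$ is amenable iff $\square G$ is coarsely amenable, and assembling these per-component witnesses into the data required by fibred coarse amenability is straightforward because the definition only asks for local coherence on bounded-diameter subsets.

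The implication $(2)\Rightarrow(3)$ should be essentially definitional: the notion of coarse amenability at infinity in Definition \ref{Def:FibredCoarseAmenability} was extracted precisely so that a fibred coarse amenability witness, restricted to any $L$-bounded subset of $\square G$ disjoint from the finite set where the ``fibres trivialise'', directly provides a coarse amenability certificate on that subset with uniform parameters $(R,\varepsilon,S)$.

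The main obstacle is $(3)\Rightarrow(1)$. Fix $R,\varepsilon>0$ and let $S$ be provided by Definition \ref{Def:FibredCoarseAmenability}. For each $L\gg R+S$ there is a finite set $K_L\subset\square G$ such that every $L$-bounded subset $C\subset\square G\setminus K_L$ admits a coarse amenability certificate with parameters $(R,\varepsilon,S)$. I would choose an index $i$ large enough that $X_i\cap K_L=\varnothing$ and, crucially, that the injectivity radius of the projection $G\to G_i$ exceeds $L+R+S$; this is possible because $\cap_i N_i=\{e\}$. Inside a ball of that radius the Cayley graphs of $G$ and $G_i$ are isometric, so taking $C$ to be a large metric ball in $X_i$ and pulling its certificate back along this local isometry produces a family $\{f_g\}_{g\in B_e(R)}$ satisfying Definition \ref{Def:CoarseAmenability} on a ball of $G$. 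The delicate point is extracting a \emph{single} coarse amenability datum defined on all of $G$: I would rely on the homogeneity of the Cayley graph, translating one local witness centred at the identity by the left $G$-action to obtain the measures $\{f_g\}_{g\in G}$ with uniform parameters at every point, thereby establishing coarse amenability of $G$.
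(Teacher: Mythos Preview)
Your argument for $(1)\Rightarrow(2)$ rests on a false premise: coarse amenability (Yu's property A) of a finitely generated group is \emph{not} the same as amenability. Free groups, and more generally all exact groups, are coarsely amenable without being amenable; there is no F\o{}lner sequence to project. What you have sketched is precisely Guentner's argument that amenability of $G$ implies coarse amenability of $\square G$, which is a different (and strictly weaker) statement than the one you need. The paper's route here is genuinely harder: it invokes the Douglas--Nowak equivariant reformulation of property A (Proposition~\ref{Prop:DouglasNowak}), which produces a single $G$-almost-invariant $\ell^\infty(G)$-valued probability measure $\xi\in\ell_u(G)$, and then projects translates of $\xi$ to the quotients. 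The $\ell^\infty$-valued nature of $\xi$ is exactly what forces the fibred structure in Definition~\ref{Def:FibredCoarseAmenability}; scalar-valued F\o{}lner data would give ordinary coarse amenability of $\square G$, which is too strong in general.

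Your $(3)\Rightarrow(1)$ also has a gap, though a more repairable one. Pulling back a coarse-amenability certificate from a ball in $X_i$ to a ball in $G$ gives you measures $\{f_g\}$ defined on that ball, but these need not be equivariant, so translating the witness at the identity by the left $G$-action does not produce a coherent family on all of $G$: the translated measures will not agree with the ones you already have at other points, and there is no reason the support condition survives. The paper instead appeals to the Brodzki--Niblo--Wright theorem (Theorem~\ref{Thm:LocalCoarseAmenability}) that \emph{local} coarse amenability implies coarse amenability: one only needs to show that every finite subset of $G$ admits a certificate with uniform $S$, and this follows directly from embedding any such subset isometrically into some $X_i$ outside $K_L$. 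Your $(2)\Rightarrow(3)$ is fine and matches Proposition~\ref{Prop:FCAimpliesCAI}.
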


In section 4., we prove that family of graphs with large girth are fibred coarsely amenable. This is contrast with a result of Willett 
 (\cite{Wi11}) showing that these spaces are never coarsely amenable. We also provide easy constructive examples of spaces which are not coarsely amenable at infinity. On the early stages of this project, it was not known to us whether non-coarsely amenable residually finite groups actually exist. Fortunately Osajda (\cite{Osa18}) constructed such groups.\\
 
{\bf Acknowledgements.} Part of this work was done during stays at the University of Geneva and the University of Vienna and the author wishes to thank Tatiana Nagnibeda and Goulnara Arzhantseva for inviting him. We would also like to thank Ana Khukhro who suggested the original question, Piotr Nowak, who pointed out Proposition \ref{Prop:DouglasNowak}, Damian Osajda for essentially proving that our theorems are not void statements and Martin Finn-Sell for the non-measurable amount of interest and help he provided to this project.
 
 \section{Fibred coarse amenability}
 
In the sequel $G$ will always be a finitely generated residually finite group equipped with the word metric associated to some finite generating set $S$, we will denote by $|g|$ the word-length of $G$. Let $\{N_i\}_{i\ge0}$ be a filtration of $G$ and as before, denote by $G_i$ the quotient group $G/N_i$, by $p_i$ the quotient map $G\twoheadrightarrow G_i$ and by $X_i$ the Cayley graph of $G_i$ with respect to the generating set $p_i(S)$.
From the definition of a filtration, we get the following important properties.

 \begin{itemize}

\item $|X_i|\to \infty$ when $i\to\infty$.
\item $\forall L\ge 0$, there exists $I\ge 0$ such that $p_i$ is \emph{$L$-isometric} $\forall i\ge I$. Meaning that $p_i$ is an isometry onto its image when restricted to balls of radius $L$.

 \end{itemize}

Our goal is to project coarse amenability of $G$ into a nice property  of $\square_{\{N_i\}}G$. To do so, we take inspiration form Guentner's idea to project amenability of $G$ to coarse amenability of the box space. We quickly sketch a proof of the easy direction of this theorem.

Fix, $R,\varepsilon>0$ and using F\o lner's criterion, let $F$ be a finite subset of $G$ such that
\[
|F\triangle gF|\le \varepsilon |F|,\, \forall g\in G \text{ with }|g|\le R.
\]
For each $x\in X_i\subset \square G$ fix $g\in G$ such that $p_i(g)=x$ and define $f_x\in P(X_i)$ by 
\[ f_x(y)=\frac{|p_i^{-1}(\{y\})\cap F|}{|F|.}\]
Check that for $i$ sufficiently large, the maps $f_x$ satisfy the conditions in Definition \ref{Def:CoarseAmenability} and that $f_x$ does not depend on the choice of $g$.

 The first obstacle to reproducing this argument in our case is that coarse amenability is not an equivariant property. Given $x\in\square G$, the maps $f_g$ and $f_h$ witnessing coarse amenability of two different representatives $f$ and $g$ of $x$ might considerably differ. Hence, we need to formulate coarse amenability in an equivariant way. Fortunately this has been done in \cite{DN}.

Consider the vector space $C(G,\ell^\infty(G))$ of functions $\xi:G\to\linf(G), x\mapsto \xi_x$ with finite support. Equip this space with the norm
\[
\|\xi\|_u=\left\|\sum_{x\in G} |\xi_x|\right\|_\infty=\sup_{x\in G}\sum_{g\in G} |\xi_g(x)|
\]
The completion of $C(G,\ell^\infty(G))$ with respect to this norm is denoted by $\ell_u(G)$ and known as the \emph{uniform convolution algebra} of $G$. $G$ naturally acts diagonally on $\ell_u(G)$ by $(g\cdot \xi)_x(y)=\xi_{xg}(yg)$. We say that $\xi\in\ell_u(G)$ is an \emph{$\linf(G)$-valued probability measure} if $\xi_x(y)\ge0\ \forall x,y\in G$  and \sloppy${\sum_{x\in G}\xi_x\equiv 1}$.

\begin{proposition}\label{Prop:DouglasNowak}\cite{DN}
A finitely generated group $G$ is coarsely amenable if for every $R,\varepsilon\ge 0$, there exists $\xi\in \ell_u(G)$ such that
\begin{enumerate}
\item $\xi$ is finitely supported,
\item $\xi$ is an $\ell^\infty(G)$-valued probability measure,
\item $\|\xi-g\cdot\xi\|_u\le\varepsilon$ whenever $|g|<R$.
\end{enumerate}
\end{proposition}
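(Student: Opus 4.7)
The plan is to establish the equivalence by giving an explicit dictionary between a Higson--Roe family $\{f_x\}_{x\in G}$ as in Definition~\ref{Def:CoarseAmenability} and an equivariant $\xi$ as in the statement. Concretely, starting from a family $\{f_x\}_{x\in G}$ witnessing coarse amenability of the Cayley graph of $G$, I set
\[
\xi_g(x):=f_x(xg^{-1}),\qquad g,x\in G,
\]
and, in the reverse direction, from $\xi\in\ell_u(G)$ satisfying (1)--(3), I recover
\[
f_x(y):=\xi_{y^{-1}x}(x),\qquad x,y\in G.
\]
These two recipes are mutually inverse, and the proposition will follow once each of the three conditions is shown to transport along this bijection.

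First I would check the support and normalisation correspondence. Reindexing by $y=xg^{-1}$ shows that $\xi_g(x)\neq 0$ forces $|g|\leq S$ whenever $\Supp(f_x)\subset B_x(S)$, and conversely; so finite support of $\xi$ inside $B_e(S)$ is equivalent to the uniform support bound $\Supp(f_x)\subset B_x(S)$. Positivity of the $f_x$ is literally positivity of $\xi$, and the identity $\sum_g\xi_g(x)=\sum_y f_x(y)$ (via the same reindexing) matches the condition $\sum_g\xi_g\equiv 1$ in $\linf(G)$ with the requirement that each $f_x$ be a probability measure.

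The key step is the computation translating the almost-invariance condition. Using the diagonal action $(h\cdot\xi)_g(x)=\xi_{gh}(xh)$, a direct calculation gives
\[
(h\cdot\xi)_g(x)=f_{xh}\bigl(xh(gh)^{-1}\bigr)=f_{xh}(xg^{-1}),
\]
so that, after the same reindexing $y=xg^{-1}$,
\[
\|\xi-h\cdot\xi\|_u=\sup_{x\in G}\sum_{y\in G}|f_x(y)-f_{xh}(y)|=\sup_{x\in G}\|f_x-f_{xh}\|_1.
\]
Since $d(x,xh)=|h|$, this equality identifies condition (3) of the proposition (for all $h$ with $|h|\leq R$) with condition (2) of Definition~\ref{Def:CoarseAmenability} (for all pairs $x,y$ with $d(x,y)\leq R$).

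There is no substantial obstacle; the argument is essentially a bookkeeping check. The only subtlety, and the point one has to be careful about, is picking the correct parametrisation so that the simultaneous right-translations of $g$ and $x$ appearing in the diagonal action cancel cleanly and yield exactly the difference $f_x-f_{xh}$. Any alternative choice (e.g.\ replacing $xg^{-1}$ by $xg$ or by $g^{-1}x$) either destroys the support bound $\Supp(\xi)\subset B_e(S)$ or leaves a residual left-translation inside the $\ell^1$-norm that cannot be controlled purely in terms of $R$.
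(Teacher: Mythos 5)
Your proposal is correct: I checked each leg of the dictionary, and with $\xi_g(x)=f_x(xg^{-1})$ one indeed gets $\Supp(\xi)\subset B_e(S)$ exactly when $\Supp(f_x)\subset B_x(S)$ for all $x$ (since $f_x(xg^{-1})\neq 0$ forces $|g|\le S$ uniformly in $x$), the reindexing $y=xg^{-1}$ identifies $\sum_g \xi_g\equiv 1$ with each $f_x$ being a probability measure, and the computation $(h\cdot\xi)_g(x)=\xi_{gh}(xh)=f_{xh}(xg^{-1})$ yields the exact equality $\|\xi-h\cdot\xi\|_u=\sup_{x\in G}\|f_x-f_{xh}\|_1$, which together with $d(x,xh)=|h|$ (left-invariance of the word metric) matches the two almost-invariance conditions. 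A comparison with ``the paper's own proof'' is not possible here, because the paper gives none: the proposition is quoted from \cite{DN}. That said, your parametrisation is the natural one and is consistent with the bookkeeping the paper uses elsewhere, e.g.\ the definition $\eta(y)(x,-)=t_x(x)\xi^x_{xy^{-1}}$ in the step ``2$\Rightarrow$1'' of Theorem \ref{Thm:FibredCoarseAmenability}, which is precisely your $xg^{-1}$-indexing in disguise; your closing observation that other parametrisations either break the support bound or leave an uncontrolled residual translation is also accurate. One small point of scope: as stated, the proposition only asserts the implication from the existence of such $\xi$ to coarse amenability (your ``reverse direction''), but the converse is equally needed by the paper --- it is invoked implicitly at the start of ``1.$\Rightarrow$3.'' in Theorem \ref{Thm:FibredCoarseAmenability} --- so proving the full equivalence, as you do, is the right call; the mismatch between $|g|<R$ in the statement and $d(x,y)\le R$ in Definition \ref{Def:CoarseAmenability} is immaterial since $R$ is arbitrary.
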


With this formulation of coarse amenability in hand, one could think of projecting translate of $\xi$ onto the box space to obtain a family of $\linf(G)$-valued probability measures on $\square G$. This approach still fails at least for two different reasons. Firstly, a careful observation shows that replacing probability measures on $X$ by $\linf(G)$-valued probability measure would not lead to a weaker definition\footnote{ In fact, such properties have already been considered as equivalent reformulations of coarse amenability. See \cite{Tu} for example, where it is shown that coarse amenability for a space $X$ can be phrased in terms of functions in $\ell^1(X)$, $\ell^1(X\times\N)$, $\ell^2(X)$ or $\ell^2(X\times\N)$.}. Secondly, given $x\in X_i\subset\square G$ and $g\in G$ a representative of $x$, the projection of $g\xi$ onto $X_i$ would still depend on the choice of the representative $g$. Indeed, although two different projections would now have the same support, their value in $\linf(G)$ would differ by a permutation of the index set $G$. However, the quotient maps $p_i$ being eventually $L$-isometric for any $L\ge 0$, it is always possible to make a coherent choice of representatives on finite subsets of a given diameter, up to forgetting the first few quotients $X_i$. This suggests the next definition, which is greatly inspired by the definition of a fibred coarse embedding (\cite{CWW,CWY}). From now on, we equip any function space of the form $C(X,\linf(\N))$ with the uniform norm
\[\|\xi\|_u=\left\|\sum_{x\in X}|\xi_x|\right\|_\infty\].
 
\begin{definition}\label{Def:FibredCoarseAmenability}
Let $X$ be a uniformly discrete metric space of bounded geometry. We say that $X$ is \emph{fibredly coarsely amenable} if there exists a field of Banach spaces $\left(\linf_x\right)_{x\in X}$ on $X$, where each $\ell^\infty_x$ is isomorphic to $\linf(\N)$, and $\forall R,\varepsilon\ge0$, there exists $S\ge0$ and a family of functions $\left(\xi^x\right)_{x\in X}$, $\xi^x:\:X\rightarrow\linf_x$ satisfying
 \begin{enumerate}
\item $\Supp(\xi^x)\subset B(x,S)$,
\item $\forall x\in X,$ $\xi^x$ is an $\linf_x$-valued probability measure.
\end{enumerate}
And such that for every $L\ge0$, there exists a finite subset $K_L\subset X$ such that for any finite subset $C\subset X\setminus K_L$ of diameter at most $L$, there exists a trivialisation  
\[
t_C: \bigsqcup_{x\in C}\linf_x\rightarrow C\times\linf(\N)
\]
satisfying the following: 
\begin{enumerate}
\setcounter{enumi}{2}
\item $\forall x\in C,\ t_C|_{\linf_x}$ is a linear invertible isometry $\linf_x\rightarrow\linf(\N)$ arising from some permutation\footnote{ Due to the Banach-Lamperti classification of isometries of $\ell^\infty$-spaces, this is a very weak requirement. It mainly insures that $t_C(x)\xi^x$ is an $\ell^\infty$-valued probability measure. Otherwise, we should use $|t_C(x)\xi^x|$ in the next item.} of the base set $A$. We set $t_C(x):=t_C|_{\linf_x}$.
\item For all $x,y\in C$ with $d(x,y)\le R$, $\|t_C(x)\xi^x-t_C(y)\xi^y\|_u\le\varepsilon$. Where $t_C(x)\xi^x:\: X\to\linf$ is defined by $(t_C(x)\xi^x)_z=t_C(x)(\xi^x_z)$.
\item For all $C_1,C_2\in X\setminus K_L$ of diameter at most $L$, $t_{C_1}(x)\circ t_{C_2}^{-1}(x)$ does not depend on $x\in C_1\cap C_2$. We denote this isometry by $t_{C_1C_2}$.
\end{enumerate}
\end{definition}
 
As desired, we get the following theorem :
\begin{theorem}\label{Thm:FibredCoarseAmenability} Let $G$ be a finitely generated residually finite group. The following statements are equivalent:
\begin{enumerate}
\item $G$ is coarsely amenable.
\item There exists a filtration $\{N_i\}$ of $G$ such that $\square G$ is fibred coarsely amenable.
\item For all filtrations of $G$, $\square G$ is fibred coarsely amenable.
\end{enumerate}
\end{theorem}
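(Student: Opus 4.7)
The plan is to establish the chain of implications $(1) \Rightarrow (3) \Rightarrow (2) \Rightarrow (1)$, where $(3) \Rightarrow (2)$ is immediate since at least one filtration of $G$ exists.

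For $(1) \Rightarrow (3)$, I would fix a filtration $\{N_i\}$ and $R, \varepsilon > 0$, and invoke Proposition \ref{Prop:DouglasNowak} to obtain a finitely supported $\xi \in \ell_u(G)$ of propagation $S$ with $\|\xi - g \cdot \xi\|_u \leq \varepsilon$ for $|g| \leq R$. For $x \in X_i$, I would take the fibre $\ell^\infty_x := \ell^\infty(p_i^{-1}(x))$, which is isomorphic to $\ell^\infty(\N)$ since $N_i$ is countable. For any representative $g \in p_i^{-1}(x)$, the translate $g \cdot \xi$ is centred at $g$, and when $p_i$ is an isometry on $B(g, S)$ its projection to $X_i$ gives a well-defined candidate $\xi^x \in C(X_i, \ell^\infty_x)$. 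The choice of representative is precisely the ambiguity the fibred structure is designed to absorb: I would declare $K_L$ to be the finite union of those $X_i$ for which $p_i$ is not $(L + S)$-isometric; for a finite $C \subset X_i \setminus K_L$ of diameter at most $L$, I would pick a base $x_0 \in C$ with representative $\tilde g_0 \in p_i^{-1}(x_0)$, so each $x \in C$ admits a canonical representative $\tilde g_x \in B(\tilde g_0, L)$, and I would define $t_C(x) : \ell^\infty(\tilde g_x N_i) \to \ell^\infty(\N)$ by the bijection $\tilde g_x n \mapsto n$ composed with a fixed identification $N_i \cong \N$. Conditions (1)--(3) of Definition \ref{Def:FibredCoarseAmenability} are then clear, and (4) reduces to $\|\xi - t \cdot \xi\|_u \leq \varepsilon$ where $t \in B(e, R)$ is the unique lift of $x^{-1}y$.

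For $(2) \Rightarrow (1)$, I would reverse the construction: given fibred data on $\square G$ with parameters $R, \varepsilon, S$, I would take $L$ large enough to see the relevant local structure, and then $i$ large enough that $X_i \cap K_L = \emptyset$ and $p_i$ is $(L + S)$-isometric. Fixing a base $x_0 \in X_i$ with representative $g_0 \in G$, the trivialisation $t_C$ on $C = B(x_0, L)$ pulls $(\xi^x)_{x \in C}$ back to a family of $\ell^\infty(G)$-valued probability measures supported near $g_0$ in $G$. Translating by $G$ and invoking condition (5) to glue the pieces coherently would yield a global $\xi' \in \ell_u(G)$ satisfying the hypotheses of Proposition \ref{Prop:DouglasNowak}, whence $G$ is coarsely amenable.

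The main obstacle I anticipate is verifying condition (5) in the $(1) \Rightarrow (3)$ direction. Two trivialisations $t_{C_1}(x), t_{C_2}(x)$ of the common fibre $\ell^\infty_x$ differ by left-multiplication by some $n_x \in N_i$, and the naive construction makes $n_x$ depend on $x$ through a conjugation by the unique lift of $x_0^{-1}x$, since $N_i$ is normal but typically not central. Overcoming this will require either a globally consistent choice of section $X \to G$ with mild geometric control, or an enlargement of $K_L$ by a fixed finite set for each $L$; the latter is harmless because $K_L$ is allowed to be any finite subset of $\square G$.
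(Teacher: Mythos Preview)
Your outline for $(1)\Rightarrow(3)$ is close to the paper's, but the obstacle you flag with condition~(5) is genuine and your proposed fixes do not remove it. The issue is your choice of fibre $\ell^\infty_x=\ell^\infty(p_i^{-1}(x))$ together with the trivialisation $\tilde g_x n\mapsto n$: if $\tilde g_x^{(1)}=m\,\tilde g_x^{(2)}$ with $m\in N_i$ the deck transformation relating the two lifts $C_1',C_2'$, then $t_{C_1}(x)\circ t_{C_2}(x)^{-1}$ is left multiplication by $(\tilde g_x^{(2)})^{-1}m^{-1}\tilde g_x^{(2)}$, which depends on $x$ unless $N_i$ is central. Enlarging $K_L$ cannot cure this, because the failure occurs on \emph{every} overlapping pair $C_1,C_2$, not just on a bad finite set. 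The paper's resolution is cleaner than either of your suggestions: take all fibres to be $\ell^\infty(G)$, fix once and for all an arbitrary representative $g_x$ of each $x$, and set $t_C(x)=\rho_{g_{x,C}\,g_x^{-1}}$ via the \emph{right} regular representation. Then $t_{C_1}(x)\circ t_{C_2}(x)^{-1}=\rho_{g_{x,C_1}g_{x,C_2}^{-1}}$, and since the two local lifts differ by a single \emph{left} translation by $m\in N_i$, one has $g_{x,C_1}g_{x,C_2}^{-1}=m$ independently of $x$. The point is that right multiplication commutes with the deck action by left multiplication; your identification $\tilde g_x n\mapsto n$ does not.

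Your sketch of $(2)\Rightarrow(1)$ is where the real gap lies. The trivialisation $t_C$ lands in $\ell^\infty(\N)$, not $\ell^\infty(G)$, and it is only defined on a finite $X_i$; there is no evident way to ``translate by $G$'' to a global $\xi'\in\ell_u(G)$ from data on a single $X_i$. The paper's argument is substantially more involved: it first packages condition~(5) as a partial action of $G_i$ on $X_i\times\N$ via $g\cdot(x,n)=(xg^{-1},\sigma_g(n))$, builds from the $\xi^x$ a vector $\eta\in C(X_i,\ell^\infty(X_i\times\N))$ which is $\varepsilon$-almost invariant under this partial action, and then lifts $\eta$ to $G$ by evaluating along an orbit map $x\mapsto p_i(x)^{-1}\cdot z_i$. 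Crucially, one must then let $i\to\infty$ and extract a limit by a Cantor diagonal argument to obtain $\xi\in\ell_u(G)$; no finite stage suffices. Your outline omits both the partial-action mechanism that makes condition~(5) usable and the limiting step.
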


\begin{proof}~\\
\noindent{\it''1.$\Rightarrow$ 3.'':} We already saw that Definition \ref{Def:FibredCoarseAmenability} was designed to render this implication true. We make the details precise. Fix a filtration $\{N_i\}$ of $G$, fix $R,\varepsilon\ge 0$ and let $\xi\in\ell_u(G)$  be given, with respect to $R$ and $\varepsilon$ by Proposition \ref{Prop:DouglasNowak}.

For $x\in X_i\subset\square G$, fix an arbitrary $g_x\in G$ such that $p_i(g_x)=x$ and define $\xi^x$ by
\[
 \xi^x_y=
\begin{cases}\sum_{h:p_i(h)=y}\limits(g_x\cdot\xi)_h&\text{if }y\in X_i,\\
0&\text{otherwise}.\end{cases}
\]

Observe that the summation is well-defined since $\xi$ is finitely supported, that $\xi^x$ is indeed a finitely supported $\linf_x$-valued probability measure and that $\Supp(\xi^x)\subset B_x(S)$ for $S$ such that $\Supp(\xi)\subset B_e(S)$, where $e$ denotes the neutral element of $G$. Moreover, as soon as $i$ is large enough, there is a one-to-one correspondence between the support of $g_x\cdot\xi$ and the support of $\xi^x$ in $X_i$. Finally, observe that if another element $g_x'$ is chosen as a representative of $x$, the function $\xi^x_y$ varies only by an isometry of $\linf(G)$ (not depending on $y$). Precisely, if we denote by $\eta^x$ the function obtained using $g'_x$ as a representative, we get
$$\eta^x_y=\rho_{g_x^{-1}g_x'}\xi^x_y,\quad\forall y\in{X_i}$$
where $\rho_g$ denotes the right regular representation of $G$ onto $\linf(G)$. 

Now, for $L\ge0$, set $K_L=\sqcup X_j$, where the union is taken over all (but finitely many) $j$'s such that $p_j$ is not $L+S$-isometric. For a given subset subset $C$ of $X\setminus K_L$ of diameter less than $L$, observe that the definition of the metric $d$ on $\square G$ in Definition \ref{Def:BoxSpace} implies that $C\subset X_i$ for some $i\ge0$. Since $p_i$ is $L$-isometric, choose $C'\subset G $ such that $p_i|_{C'}$ is an isometry onto $C$ and denote by $g_{x,C}$ the unique representative of $x$ contained in $C'$. Define $t_C$, as in Definition \ref{Def:FibredCoarseAmenability} by
\[t_C(x)=\rho_{g_{x,C}g_x^{-1}}.\]
From the construction so far, we straightforwardly obtain
\[
\|t_C(x)\xi^x-t_C(y)\xi^y\|_u=\|g_{x,C}\xi-g_{y,C}\xi\|_u\le \varepsilon
\]
when $|g_{x,C}^{-1}g_{y,C}|=d(x,y)\le R$.

Finally, given $C_1$ and $C_2$ such that $C_1\cap C_2\neq \emptyset$, observe that there exists a unique $g_{C_1C_2}\in G$ such that $g_{x,C_1}=g_{C_1,C_2}g_{x,C_2}$ for all $x\in C_1\cap C_2$. We get that
\[
t_{C_1}(x)\circ t_{C_2}^{-1}(x)=
\rho_{g_{x,C_1}g_x^{-1}}
\left(\rho_{g_{x,C_2}g_x^{-1}}\right)^{-1}
=\rho_{g_{x,C_1}g_{x,C_2}^{-1}}
=\rho_{g_{C_1,C_2}}
\]
 which indeed, does not depend on $x\in C_1\cap C_2$.
 
\noindent{\emph{''3$\Rightarrow$2''}} is immediate. \\

\noindent{\emph{''2$\Rightarrow$1''}:}
Fix $R,\varepsilon>0$ and let $\left(\xi^x\right)_{x\in X}$ and $S\ge0$ be given by fibred coarse amenability. Choose $i$ large enough so that $p_i$ is $S$-isometric and such that ${X_i\cap K_S=\emptyset}$ and define $L=L(i)$ to be the largest integer for which $p_i$ is $L$-isometric. Later on, we will let $i$ tend to infinity and both conditions will be eventually always satisfied. To keep our notation simple, for $x,y\in X_i$, denote  by $t _x$ (respectively $t_{x,y}$), the trivialization $t_{B_x(L)}$ (respectively $t_{B_x(L)B_y(L)}$).

Since $p_i$ is $L$-isometric with $L\ge S$ , $\Supp (\xi^x)\subset X_i$ for all $x\in X_i$. This allows to see $\xi^x$ as an element of $C(X_i,\linf_x)$. We wish to combine all the information contained in the vectors $\xi^x$, for $x\in X_i$ into a single vector in a bigger space. We consider the space $C(X_i,\linf(X_i\times\N)$ equipped with the natural uniform norm $\|\cdot\|_u$ and define a partial action of $G_i$ onto it by
\[
(g\cdot\zeta)(y)(x,-)=t_{x,xg}\zeta(yg)(xg,-).
\]
 This expression is defined when $B_x(L)\cap B_{xg}(L)\neq\emptyset$, i.e. when $|g|\le 2L$, and it satisfies 
$$g\cdot (h\cdot \zeta)=(gh)\cdot \zeta$$
whenever $|g|,|h|,|gh|\le 2L$ (this is due to the relation \sloppy ${t_{x,xg}\circ t_{xg,xgh}=t_{x,xgh}}$).

Now, define $\eta\in C\left(X_i,\linf(X_i\times\N)\right)$ by
\[
\eta(y)(x,-)=t_x(x)\xi^x_{xy^{-1}}
\]
The vector $\eta$ is $\varepsilon$-invariant under the partial action of the ball of radius $R$ in $G_i$ in the following sense. Let $g\in G_i$, $|g|\le R$,
\begin{eqnarray*}
\|\eta-g\cdot \eta\|_u&=& \left\|\sum_{y\in X_i} \left|\eta(y)-(g\cdot\eta)(y)\right|\right\|_{\linf(X_i\times\N)}\\
&=&\sup_{x\in X_i}\left\|\sum_{y\in X_i} \left|\eta(y)(x,-)-(g\cdot\eta)(y)(x,-)\right|\right\|_\infty\\
&=&\sup_{x\in X_i} \left\|\sum_{y\in X_i}\left|\eta(y)(x,-)-t_{x,xg}\eta(yg)(xg,-)\right|\right\|_\infty\\
&=&\sup_{x\in X_i} \left\|\sum_{y\in X_i}\left|t_x(x)\xi^x_{xy^{-1}}-t_{x,xg}t_{xg}(xg)\xi^{xg}_{xgg^{-1}y^{-1}}\right|\right\|_\infty\\
&=&\sup_{x\in X_i} \left\|\sum_{y\in X_i}\left|t_x(x)\xi^x_{xy^{-1}}-t_x(xg)\xi^{xg}_{xy^{-1}}\right|\right\|_\infty\\
&=&\sup_{x\in X_i}\left\|t_x(x)\xi^x-t_x(xg)\xi^{xg}\right\|_u\le\varepsilon
\end{eqnarray*}

According to Definition \ref{Def:FibredCoarseAmenability}, when $g\in G_i$, $|g|\le 2L$, the isometry $t_{x,xg}$ arises from a permutation $\sigma_g\in\Sym(\N)$ in the sense that $(t_{x,xg}\xi)(n)=\xi(\sigma_g^{-1}(n))$. The action of $G_i$ on $C\left(X_i,\linf(X_i\times \N)\right)$ can then be rewritten as
\[
(g\cdot\zeta)(y)(x,n)=\zeta(yg)(g^{-1}\cdot(x,n)),
\]
Where the formula $g\cdot(x,n)=(xg^{-1},\sigma_g(n))$ defines a partial action of $G_i$ on $X_i\times\N$. We now lift the values of $\eta(y)$ to a vector in $\linf(G)$ by using the map ${x\mapsto p_i(x)^{-1}\cdot z_i}$, where $z_i$ is any fixed base point of  $X_i\times\N$. 

For $x,y\in G$, consider the sequence $\left(a_{x,y,i}=\eta(p_i(y))(p_i(x)^{-1}z_i)\right)_{i\in\N}$. This sequence is well-defined for all $i$ large enough so that $|x|\le 2L(i)$ and takes values in the compact interval [0,1]. Using Cantor's diagonal argument and passing to a subsequence, we can assume that the sequence $(a_{x,y,i})_{i\in\N}$ converges for all $x,y\in G$. Define $\xi\in \ell_u(G)$ by 
\[\xi_y(x)= \lim_{i\to\infty} a_{x,y,i} .\]
For $|g|\le R$, we compute:
\begin{eqnarray*}
\|\xi-g\cdot\xi\|_u&=&\sup_{x\in G}\sum_{y\in G}\left|\xi_y(x)-\xi_{yg}(xg)\right|\\
&=&\sup_{x\in G}\sum_{y\in G}\lim_{i\to\infty}\left|\eta(p_i(y))(p_i(x)^{-1}z_i)-\eta(p_i(yg))(p_i(xg)^{-1}z_i)\right|\\
&=&\sup_{x\in G}\lim_{i\to\infty}\sum_{y\in G_i}\left|(\eta(y)(p_i(x)^{-1}z_i)-\eta(yp_i(g))(p_i(g)^{-1}p_i(x)^{-1}z_i)\right|\\
&=&\sup_{x\in G}\lim_{i\to\infty}\sum_{y\in G_i}\left|\eta(y)(p_i(x)^{-1}z_i)-(p_i(g)\cdot\eta)(y)(p_i(x)^{-1}z_i)\right|\\
&\le& \lim_{i\to\infty}\left\|\sum_{y\in G_i}\eta(y)-(p_i(g)\cdot\eta)(y)\right\|_\infty\\
&\le &\varepsilon
\end{eqnarray*}
Very similar computation shows that $\xi$ is indeed an $\linf(G)$-valued probability measure, which concludes the proof.

\end{proof}

\section{Coarse amenability at infinity}
 
There are two main ingredients in Definition \ref{Def:FibredCoarseAmenability}. First, the maps $\xi^x$ together with the trivialisation $t_C$ are witnesses of coarse amenability for  the subset $C$. Secondly, the existence of the maps $t_{C_1C_2}$ allows to "patch-up" these witnesses in a coherent manner. We'll see that the second condition is not actually needed to serve our purpose.
We will rely on the following definition and result, due to Brodzki, Niblo and Wright (\cite{BNW}). Let $X$ be a uniformly discrete  metric space of bounded geometry.

\begin{definition}
$X$ is \emph{locally coarsely amenable} if for all $R,\varepsilon$, there exists $S\ge0$ such that every finite subset of $X$ satisfies Definition \ref{Def:CoarseAmenability} with respect to the parameters $R,\varepsilon$ ans $S$.
\end{definition}

\begin{theorem}[\cite{BNW}]\label{Thm:LocalCoarseAmenability} $X$ is coarsely amenable if and only if $X$ is locally coarsely amenable.
\end{theorem}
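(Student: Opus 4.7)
The plan is to treat the two implications separately. The forward direction is immediate: if $(f_x)_{x\in X}$ witnesses coarse amenability of $X$ with parameters $(R,\varepsilon,S)$, then for any finite subset $F\subset X$ the restricted family $(f_x)_{x\in F}$ witnesses coarse amenability of $F$ with the very same parameters, since both the support condition and the $\ell^1$-distance condition are purely local.

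All the content lies in the converse. My approach is a Cantor diagonal extraction. Since $X$ is uniformly discrete of bounded geometry (and, as is standard in this context, countable), I enumerate $X=\{x_1,x_2,\ldots\}$ and set $F_n=\{x_1,\ldots,x_n\}$. Fix $R,\varepsilon>0$ and apply local coarse amenability to obtain a single $S>0$ and, for each $n$, a family $(f^n_x)_{x\in F_n}$ satisfying $\Supp(f^n_x)\subset B_x(S)$ and $\|f^n_x-f^n_y\|_1\le\varepsilon$ whenever $x,y\in F_n$ with $d(x,y)\le R$. Bounded geometry ensures that each $B_{x_k}(S)$ is finite, so the probability measures supported there form a compact finite-dimensional simplex. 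For every fixed $k$, the sequence $(f^n_{x_k})_{n\ge k}$ thus admits a convergent subsequence, and a standard diagonal extraction yields a subsequence $(n_j)$ along which $f^{n_j}_{x_k}$ converges, as $j\to\infty$, to some probability measure $f_{x_k}$ supported in $B_{x_k}(S)$, simultaneously for every $k\in\N$.

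It remains to verify the uniform control. Given $x,y\in X$ with $d(x,y)\le R$, both points eventually lie in $F_{n_j}$, so $\|f^{n_j}_x-f^{n_j}_y\|_1\le\varepsilon$ for all sufficiently large $j$; since the $\ell^1$-norm is continuous on the finite-dimensional simplex of probability measures supported in $B_x(S)\cup B_y(S)$, the estimate survives the limit and yields $\|f_x-f_y\|_1\le\varepsilon$. The resulting family $(f_x)_{x\in X}$ therefore witnesses coarse amenability of $X$ with parameters $(R,\varepsilon,S)$. I do not anticipate a serious obstacle: the only delicate point is organising the diagonal extraction, but this is routine once the crucial uniform-in-$n$ support bound $S$ has been extracted from local coarse amenability and combined with bounded geometry.
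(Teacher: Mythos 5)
The paper does not prove this statement at all---it is quoted from \cite{BNW}---so your proposal must stand on its own. Your converse direction, which is where all the substance lies, is correct and is essentially the standard compactness argument: local coarse amenability supplies the crucial uniform $S=S(R,\varepsilon)$; bounded geometry makes each $B_{x_k}(S)$ finite, so the probability measures supported there form a compact simplex; a Cantor diagonal extraction along the exhaustion $F_n$ produces limit measures $f_{x_k}$, and the $\ell^1$-estimate passes to the limit since for fixed $x,y$ with $d(x,y)\le R$ it holds for all large $n$ and the norm is continuous on a finite-dimensional simplex. Your parenthetical appeal to countability is also harmless: a finite-valued metric together with bounded geometry exhibits $X$ as a countable union of finite balls around any basepoint.

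The gap is in the direction you dismissed as immediate. Under the natural reading of the definition of local coarse amenability---a finite subset $C$ satisfies Definition \ref{Def:CoarseAmenability} \emph{as a metric space in its own right}, i.e.\ with probability measures on $C$ itself---the restricted family $(f_x)_{x\in F}$ is not a witness: each $f_x$ is a probability measure on $X$ whose support $B_x(S)$ may stick out of $F$, and truncating it to $F$ destroys normalisation (the mass of $f_x$ inside $F$ can be arbitrarily small). The conditions are local in $x$, but not in the support variable. The repair is short but requires a step you did not state: choose a map $\pi$ sending each $z$ with $d(z,F)\le S$ to a nearest point of $F$, and replace $f_x$ by the pushforward $\pi_\ast f_x$ for $x\in F$. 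Pushforward contracts total variation, so $\|\pi_\ast f_x-\pi_\ast f_y\|_1\le\|f_x-f_y\|_1\le\varepsilon$, and $\Supp(\pi_\ast f_x)\subset B_x(2S)\cap F$; thus every finite subset satisfies the definition with the uniform parameters $(R,\varepsilon,2S)$, which is all the theorem asks. Note that the paper itself is alive to exactly this issue: in the proof of Proposition \ref{Prop:FCAimpliesCAI} it obtains witnesses for $C$ supported near, but not inside, $C$, and invokes Proposition 4.2 of \cite{Tu} to convert them at the cost of changing the parameters. If you instead intend the ambient reading, under which witnesses for $C$ may live on $X$, then your restriction argument is fine and your converse works under either reading---but then the claim ``with the very same parameters'' should be flagged as depending on that convention rather than asserted as automatic.
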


The following definition, which was already mentioned in the introduction is a natural weakening of uniform coarse amenability.

\begin{definition}
$X$ is \emph{coarsely amenable at infinity} if for every $R,\varepsilon\ge0,$ there exists $S\ge0$ such that for all $L\ge0$, there exists a finite subset $K_L$ of $X$ such that every subset $C\subset X\setminus K_L$ of diameter at most $L$ satisfies the requirement of Definition \ref{Def:CoarseAmenability} for the parameters $R,\varepsilon,S$.
\end{definition}

Local coarse amenability requires that all finite subspaces enjoy coarse amenability uniformly. Coarse amenability at infinity requires that almost all finite subspaces enjoy coarse amenability uniformly, up to forgetting some finite subset of the space depending on the scale of the finite subspace in consideration. In other words, $X$ is coarsely amenable at infinity if, at each scale, only finitely many finite subspaces behave badly.

The following proposition shows that uniform coarse amenability is a weakening of fibred coarse amenability.

\begin{proposition}\label{Prop:FCAimpliesCAI}
If $X$ is fibredly coarsely amenable then $X$ is coarsely amenable at infinity.
\end{proposition}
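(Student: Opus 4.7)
My plan is to observe that the parameter $S$ from fibred coarse amenability, together with the finite set $K_L$ that comes with it, already witnesses coarse amenability at infinity: the trivialisations $t_C$ allow us to convert the $\ell^\infty_x$-valued probability measures $\xi^x$ into honest probability measures on $X$ for each admissible $C$, and the $\|\cdot\|_u$-estimate immediately downgrades to an $\ell^1$-estimate by evaluation at a point. The patching-up condition (item 5 in Definition \ref{Def:FibredCoarseAmenability}) plays no role whatsoever in this implication, which is exactly the point the authors make before Theorem \ref{Thm:LocalCoarseAmenability}.

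Concretely, fix $R,\varepsilon>0$ and let $S\geq 0$, the field $(\ell^\infty_x)_{x\in X}$ and the family $(\xi^x)_{x\in X}$ be given by fibred coarse amenability for these parameters. For $L\geq 0$, let $K_L\subset X$ be the finite subset supplied by the definition. Pick any $C\subset X\setminus K_L$ of diameter at most $L$, together with its trivialisation $t_C$, and fix once and for all a base point $n_0$ in the index set $\mathbb{N}$. For each $x\in C$ define
\[ f_x : X\to\mathbb{R},\qquad f_x(z) = \bigl(t_C(x)\xi^x_z\bigr)(n_0). \]
Since $t_C(x):\ell^\infty_x\to\ell^\infty(\mathbb{N})$ arises from a permutation of $\mathbb{N}$, it is positivity preserving and sends the constant function $\mathbf{1}$ to $\mathbf{1}$. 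Combined with the fact that $\xi^x$ is an $\ell^\infty_x$-valued probability measure, this gives $f_x(z)\geq 0$ and
\[ \sum_{z\in X} f_x(z) = \Bigl(t_C(x)\sum_{z\in X}\xi^x_z\Bigr)(n_0) = \mathbf{1}(n_0) = 1, \]
so $f_x$ is a probability measure on $X$. The support condition is immediate: $\Supp(f_x)\subset\Supp(\xi^x)\subset B_x(S)$, so in particular $f_x$ is finitely supported.

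It remains to verify the $\ell^1$-closeness. For $x,y\in C$ with $d(x,y)\leq R$ we compute
\[ \|f_x - f_y\|_1 = \sum_{z\in X}\bigl| (t_C(x)\xi^x_z)(n_0) - (t_C(y)\xi^y_z)(n_0)\bigr| \leq \sup_{n\in\mathbb{N}}\sum_{z\in X}\bigl|(t_C(x)\xi^x_z)(n)-(t_C(y)\xi^y_z)(n)\bigr|, \]
and the right-hand side is exactly $\|t_C(x)\xi^x - t_C(y)\xi^y\|_u$, which is at most $\varepsilon$ by condition (4) of Definition \ref{Def:FibredCoarseAmenability}. Thus $\{f_x\}_{x\in C}$ witnesses Definition \ref{Def:CoarseAmenability} for $C$ with parameters $R,\varepsilon,S$, so $X$ is coarsely amenable at infinity.

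I do not anticipate a serious obstacle; the only small point requiring care is the use of the Banach--Lamperti-type hypothesis that each $t_C(x)$ comes from a permutation, which is precisely what is needed to guarantee that evaluation at $n_0$ produces a nonnegative function summing to $1$. Without that hypothesis, one would have to replace $t_C(x)\xi^x$ by its pointwise absolute value, as the footnote attached to item (3) of Definition \ref{Def:FibredCoarseAmenability} already warns.
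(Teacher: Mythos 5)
Your construction is exactly the paper's: the paper also defines $f_x(z)=t_C(x)\xi^x_z(0)$ (evaluation of the trivialised field at a fixed coordinate) and runs the identical estimate $\|f_x-f_y\|_1\le\|t_C(x)\xi^x-t_C(y)\xi^y\|_u\le\varepsilon$, and it likewise makes no use of the patching condition (5). However, your final sentence claims slightly more than you have proved, and this is precisely the point where the paper inserts an extra step. Your measures $f_x$ are probability measures on $X$ with $\Supp(f_x)\subset B_x(S)$, and these balls need not be contained in $C$; so if, as in the paper (and as consistency with Theorem \ref{Thm:LocalCoarseAmenability} and the proof of ``$2\Rightarrow1$'' of Theorem \ref{Thm:CoarseAmenabilityAtInfinity} requires), ``$C$ satisfies Definition \ref{Def:CoarseAmenability}'' means that $C$ as a metric space in its own right carries such a family of measures supported \emph{in} $C$, then your family does not yet witness this. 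The paper acknowledges that the $f_x$ only witness coarse amenability of $C$ viewed as a subset of $\bigcup_{x\in C}\Supp(f_x)$, and then invokes a standard localisation argument (the proof of Proposition 4.2 in \cite{Tu}) to convert these into genuine witnesses for $C$, at the cost of replacing $(R,\varepsilon,S)$ by parameters $(R',\varepsilon',S')$ depending only on $(R,\varepsilon,S)$ and not on $C$ --- which is harmless, since coarse amenability at infinity only demands \emph{some} uniform $S$ for each $(R,\varepsilon)$. Relatedly, the paper takes $C\subset X\setminus K_{L+S}$ rather than $X\setminus K_L$ as a safety margin for the enlarged supports.

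This is a small and entirely repairable gap rather than a wrong approach: everything up to the last line (positivity and total mass of $f_x$ via the permutation hypothesis on $t_C(x)$, the support bound, the $\ell^1$-versus-$\|\cdot\|_u$ estimate) is correct and matches the paper. To close it, either state explicitly that you allow the witnessing measures for $C$ to be supported in the $S$-neighbourhood of $C$ in $X$ and observe that this formulation is equivalent to the intrinsic one up to a uniform change of parameters, or cite the restriction argument from \cite{Tu} as the paper does.
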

\begin{proof}
 Fix $R,\varepsilon\ge0$ and $L\ge0$ and let $(\xi^x)_x\in X,S\ge0$ and $K_L\subset X$ be given by fibred coarse amenability. Fix $C\subset X\setminus K_{L+S}$ of diameter at most $L$ and let $t_C$ be the trivialisation associated to $C$. Now, for $x\in C$ define a probability measure $f_x$ on $X$ by $f_x(z)=t_C(x)\xi^x_z(0)$. For $x,y$ such that $d(x,y)\le R$, we have
\begin{eqnarray*}
\|f_x-f_y\|_1&=&\sum_{z\in X}|f_x(z)-f_y(z)|\\
&=& \sum_{z\in X}|t_C(x)\xi^x_z(0)-t_C(y)\xi^y_z(0)|\\
&\le& \left\|\sum_{z\in X}|t_C(x)\xi^x_z-t_C(y)\xi^y_z|\right\|_\infty\\
&=&\|t_C(x)\xi^x-t_C(y)\xi^y\|_u\le \varepsilon.
\end{eqnarray*} 
The probability measures $f_x$ act as witnesses of coarse amenability for $C$, seen as a subset of $\cup_{x\in C} \Supp{f_x}$. From the existing literature on coarse amenability (see e.g. the proof of Proposition 4.2 in \cite{Tu}) we deduce that $C$ is coarsely amenable with respect to a family of parameters $R',\varepsilon',S'$ independent of $C$.

\end{proof}

In the case of a box-space, we get a similar result as in the previous section:

\begin{theorem}\label{Thm:CoarseAmenabilityAtInfinity} Let $G$ be a finitely generated residually finite group. The following statements are equivalent:
\begin{enumerate}
\item $G$ is coarsely amenable.
\item There exists a filtration $\{N_i\}$ of $G$ such that $\square G$ is coarsely amenable at infinity.
\item For all filtrations of $G$, $\square G$ is coarsely amenable at infinity.
\end{enumerate}
\end{theorem}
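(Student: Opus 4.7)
The plan is to prove the cycle of implications by leveraging the results already in place. Specifically, the implication (1)$\Rightarrow$(3) will follow by composing Theorem \ref{Thm:FibredCoarseAmenability} with Proposition \ref{Prop:FCAimpliesCAI}: if $G$ is coarsely amenable then every box space of $G$ is fibred coarsely amenable, and fibred coarse amenability implies coarse amenability at infinity. The implication (3)$\Rightarrow$(2) is immediate, since a filtration of $G$ exists by assumption ($G$ is residually finite). So the substantive content is (2)$\Rightarrow$(1), which I will prove via the ``local'' reformulation of coarse amenability from Theorem \ref{Thm:LocalCoarseAmenability} of Brodzki--Niblo--Wright.

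For (2)$\Rightarrow$(1), assume $\square_{\{N_i\}}G$ is coarsely amenable at infinity and fix $R,\varepsilon>0$ together with an arbitrary finite subset $F\subset G$. Set $L=\diam(F)$, and let $S\ge0$ and $K_L\subset\square G$ be furnished by coarse amenability at infinity for the parameters $R,\varepsilon,L$. Because each $X_j$ is finite and $K_L$ is a finite union of the $X_j$'s, for all $i$ sufficiently large we have $X_i\cap K_L=\emptyset$; furthermore, for all $i$ sufficiently large the quotient map $p_i$ is $(L+2S+R)$-isometric. Fix one such index $i$. Then $C_i:=p_i(F)\subset X_i\setminus K_L$ is a set of diameter at most $L$, so by hypothesis there exists a family $\{f_x\}_{x\in C_i}$ of finitely supported probability measures on $X_i$, supported in $B_x(S)$, and satisfying $\|f_x-f_y\|_1\le\varepsilon$ whenever $d(x,y)\le R$.

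Pull back these witnesses to $G$ as follows: for $g\in F$, define $\tilde{f}_g:G\to[0,1]$ by $\tilde{f}_g(h)=f_{p_i(g)}(p_i(h))$ if $d(g,h)\le S$, and $\tilde{f}_g(h)=0$ otherwise. Since $p_i$ restricted to $B_g(S)$ is an isometry onto $B_{p_i(g)}(S)$ (by the choice of $i$), the map $\tilde{f}_g$ is a probability measure on $G$ supported in $B_g(S)$. For $g_1,g_2\in F$ with $d(g_1,g_2)\le R$, the support $B_{g_1}(S)\cup B_{g_2}(S)$ lies in a set of diameter at most $L+2S+R$, on which $p_i$ is isometric, so
\[
\|\tilde{f}_{g_1}-\tilde{f}_{g_2}\|_1=\sum_{x\in B_{p_i(g_1)}(S)\cup B_{p_i(g_2)}(S)}|f_{p_i(g_1)}(x)-f_{p_i(g_2)}(x)|=\|f_{p_i(g_1)}-f_{p_i(g_2)}\|_1\le\varepsilon.
\]
Thus $F$ satisfies Definition \ref{Def:CoarseAmenability} with parameters $R,\varepsilon,S$. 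As $F$ was arbitrary, $G$ is locally coarsely amenable, hence coarsely amenable by Theorem \ref{Thm:LocalCoarseAmenability}.

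The only technical point that needs care is the bookkeeping on the isometry scale: one must ensure $p_i$ is isometric not just on $F$ but on a neighbourhood large enough that the full supports of $\tilde{f}_{g_1}$ and $\tilde{f}_{g_2}$ are faithfully reflected in $X_i$. This is why the quotient map must be taken $(L+2S+R)$-isometric rather than merely $L$-isometric, and it is the only subtle ingredient of the argument; everything else is a routine transport of structure via $p_i$, exactly parallel in spirit to the easy direction sketched in Section 2 for Guentner's theorem.
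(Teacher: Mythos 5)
Your proof is correct, and the substantive direction (2)$\Rightarrow$(1) takes essentially the same route as the paper: pull back the witnesses of coarse amenability at infinity through an eventually isometric quotient map $p_i$, conclude that $G$ is locally coarsely amenable with a uniform $S$, and invoke the Brodzki--Niblo--Wright Theorem \ref{Thm:LocalCoarseAmenability}. In fact you are more careful than the paper here: the paper only asks that $p_i$ be $L$-isometric for $L\ge\diam(C)$ and asserts that $C\cong p_i(C)$ settles the matter, whereas your insistence on $p_i$ being $(L+2S+R)$-isometric correctly accounts for the fact that the witnessing measures are supported in ambient $S$-balls around $C$, so the pullback and the $\ell^1$ estimate need injectivity of $p_i$ on the larger set $B_{g_1}(S)\cup B_{g_2}(S)$; this is exactly the bookkeeping the paper's terse argument glosses over. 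Where you genuinely diverge is (1)$\Rightarrow$(3): the paper proves this directly in three lines --- take $K_L$ to be the union of the $X_i$ on which $p_i$ fails to be $L$-isometric, and use local coarse amenability of $G$ to get a uniform $S$ --- with no fibred machinery at all, whereas you compose Theorem \ref{Thm:FibredCoarseAmenability} with Proposition \ref{Prop:FCAimpliesCAI}. Your route is logically valid and non-circular (neither of those results depends on the present theorem), but it leans on the hardest proof in the paper plus a parameter-perturbation argument via \cite{Tu}, while the paper's direct argument keeps this theorem elementary and independent of Section 2; indeed the paper explicitly advertises that its proof ``is much easier than the proof of Theorem \ref{Thm:FibredCoarseAmenability}''. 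Two cosmetic points: $K_L$ is merely a finite subset of $\square G$, not necessarily a union of components $X_j$ --- your conclusion that $X_i\cap K_L=\emptyset$ for large $i$ still holds since a finite set meets only finitely many components; and the diameter bound $2S+R$ already suffices for the union of supports, so $L+2S+R$ is harmless overkill.
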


\begin{proof} The proof is much easier than the proof of Theorem \ref{Thm:FibredCoarseAmenability}.

\noindent{\it''1.$\Rightarrow$ 3.'':} Fix $R,\varepsilon, L\ge0$ and define $K_L$ as the union of those $X_i$ for which $p_i$ is not $L$-isometric. Outside $K_L$ all subsets of diameter at most $L$ are isometric to a subset of the ball of radius $L$ in $G$. Since $G$ is coarsely amenable and hence locally coarsely amenable, there exists $S\ge0$ such that any finite subset of $G$ is coarsely amenable with respect to the parameters $R,\varepsilon,S$ .\\

\noindent{\emph{''3$\Rightarrow$2''}} is immediate. \\

\noindent{\emph{''2$\Rightarrow$1''}:} We establish that $G$ is uniformly coarsely amenable. Fix $R,\varepsilon>0$ and a finite subset $C\subset G$ and let $S>0$ be given by coarse amenability at infinity of $\square G$. Choose $L$ such that $C$ is a subset of a ball of radius $L$ in $G$ and choose $i$ large enough so that $p_i$ is $L$-isometric and that $X_i\cap K_L=\emptyset$. $C$ is isometric to $p_i(C)$ and by coarse amenability at infinity of $\square G$, $C$ is coarsely amenable with respect to the parameters $R,\varepsilon,S$. 

\end{proof}

In view of Proposition \ref{Prop:FCAimpliesCAI}, it is natural to wonder whether fibred coarse amenability and coarse amenability at infinity are actually the same property. In full generality, this question seems very difficult to attack, but some cases are easily tackled.

\begin{proposition}~
\begin{enumerate}
\item For a finitely generated group, coarse amenability, fibred coarse amenability and coarse amenability at infinity are equivalent.
\item The same holds for any uniformly discrete metric space $X$ of bounded geometry for which the orbit of any point of $X$ under the isometry group of $X$ is infinite.
\item For a box space $\square G$, fibred coarse amenability and coarse amenability at infinity are equivalent.
\end{enumerate}
\end{proposition}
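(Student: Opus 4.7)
The plan is to reduce each of the three statements to results already established in the paper. For any uniformly discrete metric space of bounded geometry, coarse amenability trivially implies fibred coarse amenability (take each fibre $\linf_x$ equal to $\linf(\N)$ and every trivialisation $t_C$ to be the identity, so that conditions (3)--(5) of Definition \ref{Def:FibredCoarseAmenability} are automatic), and fibred coarse amenability implies coarse amenability at infinity by Proposition \ref{Prop:FCAimpliesCAI}. Hence in all three cases the only non-trivial implication to prove is that coarse amenability at infinity implies coarse amenability. Statement (3) then follows with no extra work: Theorems \ref{Thm:FibredCoarseAmenability} and \ref{Thm:CoarseAmenabilityAtInfinity} characterise the two respective properties of $\square G$ as coarse amenability of $G$, so both are equivalent for a box space via $G$.

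For (1) and (2), the plan is to invoke the Brodzki--Niblo--Wright criterion (Theorem \ref{Thm:LocalCoarseAmenability}): it suffices to establish local coarse amenability. Fix $R, \varepsilon > 0$, let $S \ge 0$ be given by coarse amenability at infinity, pick any finite $C \subset X$, set $L = \diam(C)$, and let $K_L$ be the associated finite ``bad set''. The strategy is to move $C$ by an isometry of $X$ into $X \setminus K_L$ and then pull back the resulting witnesses of coarse amenability, with unchanged parameters $(R,\varepsilon,S)$, to obtain witnesses for $C$ itself. In case (1), the regular left action of $G$ on itself is transitive and isometric, so one fixes $x_0 \in C$ and chooses $g \in G$ with $d(gx_0, K_L) > L$; such $g$ exists because bounded geometry forces the $L$-neighbourhood of $K_L$ to be finite while $G$ is infinite. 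Then $g \cdot C \subset G \setminus K_L$ still has diameter $L$ and so admits witnesses, which pull back to $C$ via $g^{-1}$. Case (2) is identical, with $g$ replaced by an isometry $\phi \in \Isom(X)$ drawn from the assumed infinite orbit of $x_0$ and chosen so that $d(\phi(x_0), K_L) > L$; the infinite orbit together with bounded geometry of $K_L$'s $L$-neighbourhood guarantees that such $\phi$ exists.

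The only routine point to verify is that pullback by an isometry preserves both the support condition and the $\ell^1$-distance condition in Definition \ref{Def:CoarseAmenability}; this is immediate since isometries preserve metric balls and pullback of measures is an $\ell^1$-isometry. The main obstacle is therefore conceptual rather than technical: one needs to know that any finite set of diameter $L$ can be moved isometrically outside any prescribed finite subset of $X$, and ``bounded geometry plus infinite isometry orbit'' is precisely the hypothesis ensuring this. It is also what goes wrong in the general case, which is why part (3) of the statement has to be argued differently and why the question of whether fibred coarse amenability and coarse amenability at infinity coincide in full generality remains open.
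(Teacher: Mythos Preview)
Your proof is correct and follows essentially the same approach as the paper's: reduce to showing that coarse amenability at infinity implies local coarse amenability via Theorem~\ref{Thm:LocalCoarseAmenability}, use the infinite-orbit hypothesis to move any finite $C$ by an isometry outside $K_L$, and handle part~(3) by combining Theorems~\ref{Thm:FibredCoarseAmenability} and~\ref{Thm:CoarseAmenabilityAtInfinity}. Your version is in fact slightly more careful than the paper's, which simply asserts that such an isometry exists without spelling out the bounded-geometry argument for the $L$-neighbourhood of $K_L$.
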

\begin{proof}~

1. This is a special case of item 2.

2. The only non-trivial implication is that coarse amenability at infinity implies coarse amenability. Fix $R,\varepsilon>0$ and $C$ a finite subset of $X$. Let $L=\operatorname{diam}(C)$ and let $K_L$ and $S$ be given by coarse amenability at infinity. Since $K_L$ is finite, it is always possible to find an isometry $\varphi$ of $X$ such that $C\cap K_L=\emptyset$. We deduce that $C$ is coarsely amenable for the parameters $R,\varepsilon,S$. Hence $X$ is locally coarsely amenable.

3. This follows from Theorems \ref{Thm:FibredCoarseAmenability} and \ref{Thm:CoarseAmenabilityAtInfinity}.
\end{proof}

\begin{remark}
A similar situation occurs when considering the Haagerup property for a group $G$. We already mentioned that a residually finite group has the Haagerup property if and only if its box-spaces admit fibred coarse embeddings into Hilbert spaces. In \cite{WY14}, the authors introduced a notion of boundary a-(T)-menability and obtained the exact same result. Similarly to our setting, any fibredly coarsely embeddable space is easily seen to be boundary a-(T)-menable, but the converse is unknown. 
\end{remark}

\section{Examples}

\subsection{Spaces of graphs with large girth}~\\

Recall that the {\it girth} $g(X)$ of a graph $X$ is the length of its smallest simple cycle and that a family of graphs $\{X_i\}_{i\in\N}$ has {\it large girth} if $g(X_i)$ tends to infinity with $n$. Given a family $\{X_i\}_{i\in\N}$ of finite metric spaces, define a metric on $\sqcup X_i$ similarly as in Definition \ref{Def:BoxSpace}. The metric space obtained this way is called the \emph{coarse disjoint union} of the $X_i$'s.

\begin{theorem}\label{Thm:FCALargeGirth}
Let $\{X_i\}_{i\in N}$ be a family of graphs with large girth such that every vertex in ${X_i}$ has degree at least 2. The coarse disjoint union $X=\sqcup_{i\in\N} X_i$ is fibredly coarsely amenable.
\end{theorem}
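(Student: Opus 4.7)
The strategy is to lift the problem to the universal covers $\pi_i\colon T_i\to X_i$ of the graphs in the family. These are infinite trees (since the minimum-degree-$2$ hypothesis forces each $X_i$ to contain a cycle, so $\pi_1(X_i)$ is a nontrivial free group), and they are coarsely amenable with parameters independent of $i$. Moreover, the large-girth assumption makes $\pi_i$ an isometry when restricted to balls of radius $<g(X_i)/2$, so locally $X_i$ looks exactly like $T_i$. I would take $\linf_x := \linf(\pi_i^{-1}(x))$ for $x\in X_i$ as the fibre at $x$; this is isomorphic to $\linf(\N)$ since the fibres of $\pi_i$ are countably infinite.

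Given $R,\varepsilon>0$, I would pick $S = \lceil 2R/\varepsilon\rceil$, fix a boundary end $\omega_i$ of each $T_i$, and let $h_v$ be the uniform probability measure on the first $S$ vertices of the geodesic ray from $v$ to $\omega_i$. A standard tree computation gives $\|h_v - h_w\|_1 \le 2d(v,w)/S \le \varepsilon$ whenever $d(v,w)\le R$, with constants independent of $i$. Then I would set
\[
\xi^x_z(w) = h_w(\widetilde z)
\]
for $w \in \pi_i^{-1}(x)$, where $\widetilde z \in T_i$ denotes the unique lift of $z$ in $B(w,S)$ (and $0$ if no such lift exists). Conditions (1) and (2) of Definition \ref{Def:FibredCoarseAmenability} then follow immediately once $g(X_i) > 2S$, because $\pi_i$ is injective on $B(w,S)$ and $\sum_z \xi^x_z(w) = \sum_{u\in B(w,S)} h_w(u) = 1$.

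For the exception sets I would put $K_L$ equal to the (finite) union of those $X_i$ with girth below a safe threshold $N = N(L,S,R)$, chosen large enough (e.g.\ $N = 4(L+S+R)$) that all balls of radius $\simeq L$ in $X_i\setminus K_L$ lift isometrically into $T_i$. For $C\subset X_i\setminus K_L$ of diameter $\le L$, I would fix a basepoint $x_0\in C$ and a bijection $\phi_0\colon \pi_i^{-1}(x_0)\to\N$, then define $t_C(x)$ to be the isometry induced by the bijection $\pi_i^{-1}(x)\to\N$ sending $w$ to $\phi_0(w_0)$, where $w_0$ is the unique lift of $x_0$ in $B(w,L)$. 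Property (4) would then reduce, for each index $n$, to $\|h_{w_x} - h_{w_y}\|_1 \le \varepsilon$, since for $z$ in both balls $B(x,S)$ and $B(y,S)$ the lifts computed from $w_x$ and from $w_y$ coincide in $T_i$, and $d_{T_i}(w_x,w_y) = d_{X_i}(x,y) \le R$.

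The main subtle point is the cocycle compatibility (5). For two sets $C_1, C_2\subset X_i\setminus K_L$ with basepoints $x_0^1, x_0^2$ and enumerations $\phi_1,\phi_2$, the composition $t_{C_1}(x)\circ t_{C_2}(x)^{-1}$ takes an index $n$, reads off the corresponding lift $w''\in\pi_i^{-1}(x_0^2)$ via $\phi_2$, parallel-transports it through $x$ to a lift $w'$ of $x_0^1$ in $T_i$, and returns $\phi_1(w')$. Because $N$ was chosen large enough that every relevant ball is a tree, parallel transport in $T_i$ is path-independent and transitive, so $w' = \tau_{x_0^2}^{x_0^1}(w'')$ depends only on $w''$, not on $x$. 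Therefore $t_{C_1}(x)\circ t_{C_2}(x)^{-1}$ is the fixed permutation $\phi_1\circ\tau_{x_0^2}^{x_0^1}\circ\phi_2^{-1}$ of $\N$, which is exactly what is required.
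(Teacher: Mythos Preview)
Your proof is correct and follows essentially the same strategy as the paper: lift to the universal covers, use uniform coarse amenability of trees, and extract the cocycle condition from covering-space theory. The only cosmetic difference is that you index the fibres $\linf_x$ by the actual preimages $\pi_i^{-1}(x)$ and phrase the trivialisations via parallel transport of lifts, whereas the paper indexes by the deck group $G_i=\pi_1(X_i)$ and obtains $t_{C_1C_2}$ as a left-regular translate $\lambda_g$; these are equivalent descriptions of the same construction once one fixes a base lift.
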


This result is in contrast with the result of Willett (\cite{Wi11}) that such spaces are never coarsely amenable. 

\begin{proof}

First, we consider coarse amenability at the level of the universal covers of the graphs $X_i$. Denote by $\tilde{X_i}$ the universal cover of $X_i$ and by $p_i:\tilde{X_i}\to X_i$ the associated covering map. $\tilde X_i$ is an infinite tree with no leaves (vertices of degree 1) because every $x\in X_i$ has degree at least 2. Also, any ball of radius $r<g(X)/2$ in $\tilde X_i$ is isometrically mapped to the ball of radius $r$ around $p_i(x)$. Since the girths of the graphs $X_i$ are increasing, the maps $p_i$ are eventually $L$-isometric for any $L\ge 0$. Trees are coarsley amenable metric spaces and, furthermore,  from the proof of this fact (see e.g. Example 4.1.5 in \cite{NY}) we actually get that the family $\tilde{X_i}$ is uniformly coarsely amenable in the sense that given $R,\varepsilon\ge0$, $S$ in Definition \ref{Def:CoarseAmenability} can be chosen independently of $i$. Fix $R,\varepsilon\ge0$ and let $(f_x)_{x\in\tilde{X_i}}$ be the probability measures witnessing coarse amenability for $\tilde X_i$.

In a manner similar to Proposition \ref{Prop:DouglasNowak}, we would like to collect the data contained in the maps $f_n$ in a single map with value in a single $\ell^\infty$-space. To do this, consider the fundamental group $G_i$ of the graph $X_i$ together with its natural action on $\tilde{X_i}$. Fixing $x\in \tilde{X_i}$ define a map
$\eta^x:\tilde{X_i}\to\ell^\infty(G_i)$ by $\eta^x_y(g)=f_{g\cdot x}(g\cdot y)$. The map $\eta^x$ satisfies the following:
\begin{enumerate}
\item $\eta^x$ is an $\ell^\infty(G_i)$-valued probability measure,
\item $\Supp\eta^x\subset B_x(S)$,
\item $d(x,y)\le R \Rightarrow||\eta^x-\eta^y||_u\le\varepsilon$
\end{enumerate}

We are now in shape to prove fibred coarse amenability for the space $X$ similarly to the first implication of Theorem \ref{Thm:FibredCoarseAmenability}. For every $i$, fix an enumeration of $G_i$ and implicitly identify $\linf(G_i)$ with $\linf(\N)$. We do not define the vectors $\xi^x$ as in Definition \ref{Def:FibredCoarseAmenability} but rather we immediately  define the vector $t_C(x)\xi^x$ instead. This is of no consequence as soon as we can give coherent definition of the maps $t_{C_1C_2}$.

Fix $L\gg 2S+R$ and define $K_L$ as the union of all those $X_i$ such that $p_i$ is not $L$-isometric. Now, for $C\subset X_i\subset X\setminus K_L$ of diameter at most $L$, find $\tilde C\subset \tilde{X_i}$ such that $p_i$ maps $\tilde C$ isometrically to $C$. Define the vectors $t_C(x)\xi^x$ by projecting the vectors $\eta^{\tilde x}$ from $B_{\tilde x}(S)$ to $B_x(S)$. The vectors $\xi^x$ naturally inherits properties (1) to (3) of the vectors $\eta^{\tilde x}$\footnote{Note that the condition $L\gg 2S+R$ insures that for $x,y\in X_i$, $d(x,y)\ge R$, $p_i$ is an isometry when restricted on some ball containing both the supports of $\eta^x$ and $\eta^y$.}

Now let $C_1,C_2$ be two subsets of $X\setminus K_L$ of diameter at most $L$ and for $x\in C_1\cap C_2$ denote by $\tilde x_1$ (resp. $\tilde x_2$) the unique element of $\tilde C_1$ (resp. $\tilde C_2$) mapped by $p_i$ to $x$. By the theory of covering maps, there exists a unique $g\in G_i$ such that $g\cdot \tilde x_1=\tilde x_2$ for every $x\in C_1\cap C_2$. We see that $t_{C_1}(x)\xi^x=\lambda_g\cdot t_{C_2}(x)\xi^x$, with $\lambda$ the natural left regular representation of $G_i$ onto $\linf(G_i)$. We get $t_{C_1C_2}=\lambda(g)$ which concludes the proof.
\end{proof}

\subsection{Spaces non-coarsely amenable at infinity}

As the title suggests, the aim of this section is to provide the first example of spaces lacking coarse amenability at infinity. We apply a construction used in \cite{W07} and \cite{FSW}.

Let $\{X_i\}$ be a sequence of finite graphs and let $X=\sqcup X_i$ be their coarse disjoint union. Set $Y_{i,j}=X_i$ for all $j$, and let $Y=\sqcup Y_{i,j}$ be the coarse disjoint union of the $Y_{i,j}$'s.

\begin{proposition}\label{Prop:non-CAI} Using the above notation, if $Y$ is coarsely amenable at infinity, then $X$ is coarsely amenable.
\end{proposition}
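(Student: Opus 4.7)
The plan is to reduce to local coarse amenability (Theorem~\ref{Thm:LocalCoarseAmenability}) and then exploit the fact that each piece $X_i$ of $X$ appears infinitely often in $Y$ as the copies $Y_{i,j}$, $j\in\N$. A finite subset $K_L\subset Y$ can therefore only ``spoil'' finitely many of the copies of each $X_i$, leaving infinitely many good copies in which any given bounded piece of $X$ can be located.

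More precisely, fix $R,\varepsilon>0$ and let $S>0$ be given by coarse amenability at infinity of $Y$, with associated finite sets $K_L\subset Y$ for each $L\ge 0$. Consider a finite subset $C\subset X$ with $\diam(C)=L$. By the definition of the metric on the coarse disjoint union, $C$ meets only finitely many pieces $X_{i_1},\dots,X_{i_k}$, each with $\diam(X_{i_l})\le L$. For each involved index $i_l$, the pieces $\{Y_{i_l,j}\}_{j\in\N}$ are pairwise at distance more than $L$ from $K_L$ for all but finitely many $j$, since $K_L$ is finite. Hence we may pick an index $j_l$ so that $Y_{i_l,j_l}\cap K_L=\emptyset$, and lift $C\cap X_{i_l}$ to its isometric copy inside $Y_{i_l,j_l}$. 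Call the resulting subset $C'\subset Y\setminus K_L$; a direct inspection of the coarse-disjoint-union metric (both for pairs of points inside a single $Y_{i_l,j_l}$ and for pairs across different $Y_{i_l,j_l}$'s) shows that the natural bijection $C\to C'$ is an isometry, so $\diam(C')=L$.

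Coarse amenability at infinity of $Y$ therefore produces witnesses $(f_{x'})_{x'\in C'}$ of coarse amenability with parameters $(R,\varepsilon,S)$. Now push these forward to $X$ along the obvious 1-Lipschitz projection
\[
p:Y\to X,\qquad Y_{i,j}\ni y\mapsto y\in X_i,
\]
setting $f_x:=p_*f_{x'}$ for $x\in C$ (where $x'\in C'$ is the lift of $x$). Each $f_x$ is a finitely supported probability measure on $X$, and pushforward contracts the $\ell^1$-norm, so $\|f_x-f_y\|_1\le\|f_{x'}-f_{y'}\|_1\le\varepsilon$ whenever $d_X(x,y)=d_Y(x',y')\le R$. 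The support condition is the only slightly delicate check: a point $y'$ in the support of $f_{x'}$ either lies in the same $Y_{i_l,j_l}$ as $x'$, in which case $d_X(x,p(y'))=d_Y(x',y')\le S$, or lies in another $Y_{i',j'}$, in which case either $i'\ne i_l$ and $d_X(x,p(y'))=d_Y(x',y')$, or $i'=i_l$ and $d_X(x,p(y'))\le\diam(X_{i_l})\le d_Y(x',y')\le S$. In all cases $\Supp(f_x)\subset B_x(S)$.

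This shows $C$ admits witnesses of coarse amenability with parameters $(R,\varepsilon,S)$, with $S$ depending only on $R,\varepsilon$. Hence $X$ is locally coarsely amenable, and by Theorem~\ref{Thm:LocalCoarseAmenability} coarsely amenable. The only real obstacle is the support estimate, which requires being careful that the projection $p$ does not increase distances but can only collapse them, and that our lift $C'$ preserves the diameter so the whole argument stays inside the regime where the hypothesis applies.
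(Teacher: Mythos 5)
Your proof is correct and follows essentially the same strategy as the paper: reduce to local coarse amenability via Theorem~\ref{Thm:LocalCoarseAmenability}, and use the fact that the finite set $K_L$ can only meet finitely many of the infinitely many copies $Y_{i,j}$, so every bounded configuration of $X$ survives isometrically in $Y\setminus K_L$. Your write-up is in fact more careful than the paper's (which only treats subsets $C$ lying in a single piece $X_n$ and transfers witnesses implicitly), since you handle subsets meeting several pieces and make the transfer explicit via the $1$-Lipschitz projection $p:Y\to X$; the only blemish is the unused claim that $\diam(X_{i_l})\le L$ for every involved piece, which fails when $C$ lies in a single piece but plays no role in the argument.
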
 

\begin{proof}
The proof is straightforward. We show that $X$ is locally coarsely amenable. Fix  $R$,$\varepsilon>0$ and let $S$ and the family of subsets $K_L$ be given by coarse amenability at infinity for $Y$. Now, given $C$ a finite subset of some $X_n$, there are infinitely many copies of $C$ inside $Y$. For $L\gg\operatorname{diam}(C)$, some copy of $C$ lies in $Y\setminus K_L$ and hence is coarsely amenable for the parameters $R,/varepsilon$ and $S$. Therefore, $X$ is uniformly coarsely amenable.
\end{proof}

This construction allows to compare coarse amenability at infinity with other coarse properties of the space.

\begin{corollary}
There exists uniformly discrete metric spaces of bounded geometry which are
\begin{enumerate}
\item coarsely amenable at infinity but non-embeddable into a Hilbert space
\item non-coarsely amenable at infinity and non-embeddable into a Hilbert space,
\item non-coarsely amenable at infinity but embeddable into a Hilbert.
\end{enumerate}
\end{corollary}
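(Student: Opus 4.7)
The plan is to realise each of the three examples by combining Theorem \ref{Thm:FCALargeGirth} and Proposition \ref{Prop:non-CAI} with well-known families of finite graphs. The underlying observation is that the contrapositive of Proposition \ref{Prop:non-CAI} reads ``$X$ not coarsely amenable $\Rightarrow$ $Y$ not coarsely amenable at infinity'', so duplication is a cheap way to manufacture failures of coarse amenability at infinity while preserving whatever Hilbert-space behaviour $X$ had.

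For (1) I would take $\{X_i\}$ to be a sequence of expanders whose girth tends to infinity, for instance the LPS Ramanujan graphs. Every vertex has degree at least $2$ and the girth grows, so Theorem \ref{Thm:FCALargeGirth} makes $X = \sqcup X_i$ fibredly coarsely amenable, and Proposition \ref{Prop:FCAimpliesCAI} then yields coarse amenability at infinity. On the other hand $X$ contains an expander family, which by the classical Gromov obstruction prevents coarse embedding into any Hilbert space.

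For (2) take $\{X_i\}$ to be any family of expanders (no girth condition is needed) and form the duplicated space $Y = \sqcup Y_{i,j}$ of Proposition \ref{Prop:non-CAI}. Then $X$ is not coarsely amenable, so $Y$ is not coarsely amenable at infinity, and $Y$ still contains the expander family, so it fails Hilbert coarse embeddability. For (3) take $\{X_i\}$ to be the component graphs of a box space $\square F_2$ of the free group of rank $2$. Large girth forces $X$ to be non-coarsely amenable (by Willett's result \cite{Wi11}), hence the duplicated $Y$ is not coarsely amenable at infinity. On the other hand a-T-menability of $F_2$ together with Guentner's theorem (recalled in the introduction) gives a coarse embedding of $X$ into Hilbert space.

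The only step that requires a short separate verification is that $Y$ in case (3) inherits Hilbert-space coarse embeddability from $X$. This reduces to the standard fact that for a coarse disjoint union of bounded-geometry finite metric spaces, coarse embeddability into a Hilbert space is equivalent to the existence of coarse embeddings of the individual components with common control functions $\rho_-,\rho_+$. Since each component of $Y$ is isometric to some component of $X$, the same $\rho_\pm$ serve for $Y$, and this is the only point going slightly beyond the direct invocation of results already established in the paper.
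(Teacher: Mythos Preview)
Your arguments for (1) and (2) are correct. For (1) you take a genuinely different route from the paper: the paper uses a box space of a coarsely amenable group with property~(T) (such as $\SL_n(\Z)$) and invokes Theorem~\ref{Thm:CoarseAmenabilityAtInfinity}, whereas you use large-girth expanders and Theorem~\ref{Thm:FCALargeGirth}. Both are valid; your approach has the advantage of exhibiting (1) without appealing to the box-space machinery, while the paper's approach keeps all three items within the same circle of ideas. For (2) your argument is essentially the paper's.

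Your argument for (3), however, has a genuine gap. You write that ``a-T-menability of $F_2$ together with Guentner's theorem (recalled in the introduction) gives a coarse embedding of $X$ into Hilbert space''. This misreads Guentner's result: as stated in the introduction, the implication goes the other way (coarse embeddability of a box space implies the Haagerup property for the group), and the converse is \emph{false}. Indeed, $F_2$ embeds as a finite-index subgroup of $\SL_2(\Z)$, and the induced congruence filtration gives a genuine filtration of $F_2$ whose box space is an expander family by Selberg's theorem; this box space certainly does not coarsely embed into Hilbert space, despite $F_2$ being a-T-menable. So ``a box space of $F_2$'' is not enough: you must choose a \emph{specific} box space known to embed. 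The paper does exactly this, citing the Arzhantseva--Guentner--\v{S}pakula construction \cite{AGS12} of a box space of $F_2$ that coarsely embeds into Hilbert space but is not coarsely amenable, and then invokes Ostrovskii's locality result \cite{Ost12} to pass embeddability to the duplicated space $Y$ (your final paragraph correctly anticipates this last step).
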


\begin{proof}~

\begin{enumerate}
\item Let $G$ be a residually finite coarsely amenable group with property (T), such as $\operatorname{SL}_n(\Z)$ and consider any box space of $G$.
\item Apply Proposition \ref{Prop:non-CAI} to the above sequence of quotients. 
\item Let $X=\sqcup_{i\in N} F_2/N_i$ be a box space of the free group which is embeddable in a Hilbert space but not coarsely amenable. Such box spaces were constructed in \cite{AGS12}. Apply the previous construction to produce a space $Y$. By a result of Ostrovskii, coarse embeddability is a local property (see \cite{Ost12} for a precise statement) and it follows that $Y$ is coarsely embeddable into a Hilbert space. But by the previous corollary, $Y$ in not coarsely amenable at infinity.
\end{enumerate}
\end{proof}

Less ad-hoc examples can be extracted from the litterature.

\begin{example}
\begin{enumerate}
\item Osajda (\cite{Osa18}) built the first examples of residually finite non-coarsely amenable groups $G$. Box-spaces $\sqcup G/N_i$ of these groups naturally fail coarse amenability at infinity 
\item Moreover, Mimura and Sakho (\cite{MS18} managed to find alternative generating sets $T_i$ of the groups $G/N_i$ such that the sequence $(G/N_i,T_i)$ converges to an amenable group $G_T$ in the space of marked groups.  From the definition of convergence it can be seen that balls of a fixed radius in the Cayley graph of $(G/N_i,T_i)$ are eventually isometric to a ball in $G_T$. It is straightforward to see that this implies that the coarse disjoint union $\sqcup \operatorname{Cay}(G_i,T_i)$ is coarsely amenable, in contrast the first item.
\item Alekseev and Finn-Sell ( see \cite{AF16} Example 7.8) observed that some LEF sequences associated to non-coarsely amenable groups constructed in \cite{Osa14}\footnote{Note that these groups predates \cite{Osa18} and were not residually finite.} are not coarsely amenable at infinity. 
\end{enumerate}
\end{example}

Finally, we wonder whether results from this paper can be adapted to fit different settings. In \cite{Roe05}, given a continuous action of a group on a compact space by homeomorphisms, Roe introduced the notion of warped cone, which is a metric space of bounded geometry. He proved, under suitable conditions, that the warped cone of the action is coarsely amenable if and only if the action is amenable.

\begin{question}
Is there an analogue of Theorem \ref{Thm:CoarseAmenabilityAtInfinity} in the setting of warped cones of topological actions?
\end{question}

As an analogue of coarse amenability for the action, one could consider exactness of the reduced cross product $C^\ast$-algebra.

\begin{question}
Are there general analogues of Theorem \ref{Thm:CoarseAmenabilityAtInfinity} in the setting of sofic groups, spaces of graphs associated to sofic representations and sofic boundaries introduced in \cite{AF16}? \end{question}

Finally , we wonder whether coarse amenability at infinity translates as a nice property of the coarse boundary groupoid (see \cite{Roe} for precise definitions of coarse groupoids and \cite{FSW} and \cite{FS} for the boundary case).

\begin{question}
Let $X$ be a uniformly discrete metric space of bounded geometry. Does $C^*$-exactness of the boundary groupoid of $X$ imply that $X$ is coarsely amenable at infinity?
\end{question}

%%%%%%%% Bibliography %%%%%%%% 

\bibliography{../../MaBib}

\end{document}